\theoremstyle{plain}
\newtheorem{theorem}{Theorem}[section]
\newtheorem{proposition}[theorem]{Proposition}
\newtheorem{lemma}[theorem]{Lemma}
\newtheorem{remark}[theorem]{Remark}
\numberwithin{equation}{section}
\numberwithin{equation}{section}
\def\XXint#1#2#3{{\setbox0=\hbox{$#1{#2#3}{\int}$}
\vcenter{\hbox{$#2#3$}}\kern-.5\wd0}}
\begin{document}
\title[Finite Morse index solutions in two dimensions]{Uniformly boundedness of finite Morse index solutions to semilinear elliptic equations with rapidly growing nonlinearities in two dimensions}
\author{Kenta Kumagai}
\address{Department of Mathematics, Tokyo Institute of Technology}
\thanks{This work was supported by JSPS KAKENHI Grant Number 23KJ0949}
\email{kumagai.k.ah@m.titech.ac.jp}
\date{\today}

\begin{abstract}
We consider the Gelfand problem with rapidly growing nonlinearities in two-dimensional bounded strictly convex domains. In this paper, we prove the uniformly boundedness of finite Morse index solutions. As a result, we show that
there exists a solution curve having infinitely many bifurcation points or turning points. These results are recently proved by the present author \cite{kuma2025} for supercritical nonlinearities when the domain is the unit ball via an ODE argument. Instead of the ODE argument, we apply a new method focusing on the interaction between the growth condition of the nonlinearities and the shape of the fundamental solution of the Laplace equation. As a consequence, we clarify the bifurcation structure for general convex domains. 

\end{abstract}
\keywords{Supercritical semilinear elliptic equation, Bifurcation diagram, Two dimensions, Non-existence, Finite Morse index solutions}
    \subjclass[2020]{Primary 35B32, 35J61; Secondary 35J25, 35B35}

\maketitle

\raggedbottom

\section{Introduction}
Let $\Omega\subset \mathbb{R}^2$ be a bounded strictly convex domain of class $C^2$. We consider the elliptic equation
\begin{equation}
\label{gelfand}
\left\{
\begin{alignedat}{4}
 -\Delta u&=\lambda f(u)&\hspace{2mm} &\text{in } \Omega,\\
u&=0  & &\text{on } \partial \Omega.
\end{alignedat}
\right.
\end{equation}
We assume that $f$ satisfies
\begin{equation}
\label{asf1}
\text{$f\in C^2[0,\infty)$ such that $f(0)>0$, 
$f'\ge 0$ and $f''\ge 0$;} 
\tag{$f_1$}
\end{equation}
\begin{equation}
\label{asf2}
(e+f(u))\log (e+f(u))\le M f'(u)\hspace{2mm}\text{for all $u>u_0$ with  $M>0$ and $u_0>0$.}
\tag{$f_2$}
\end{equation}
The condition \eqref{asf2} implies that $f(u)\ge \exp(c\exp(u/M))-e$ with some $c>0$ for any $u>u_0$ (see Lemma \ref{grlem}). The following are examples of $f$ satisfying \eqref{asf2}:
\begin{equation*}
    f(u)=\exp(\exp(u/q)), f(u)=\exp(\exp (u^p)), \hspace{1mm}\text{and}\hspace{1mm} 
    f(u)=\exp(\exp(\cdots\exp (u)\cdots),
\end{equation*}
where $q>0$ and $p\ge 1$.

We are interested in the relation between the Morse index and the $L^{\infty}$ norm of the solution $u$. For a given $\Omega'\subset\subset\Omega$ and $k\in \mathbb{N}$, we say that a solution $u$ of \eqref{gelfand} has the Morse index $k$ in $\Omega'$, and we denote by $m(u,\Omega')=k$, if $k$ is the maximal dimension of a subspace $X_k\subset C^{1}_{c}(\Omega')$ such that 
\begin{equation*}
    Q_{u}(\xi):=\int_{\Omega'}\left(|\nabla\xi|^2-\lambda f'(u)\xi^2\right)\,dx<0 \hspace{4mm}\text{for any $\xi\in X_k \setminus \{0\}$}.
\end{equation*}
For the corresponding problem in $\Omega \subset \mathbb{R}^N$ with $N\ge 3$, a celebrated result of Bahri and Lions \cite{BL} states that the $L^{\infty}$ norms of solutions are bounded if and only if their Morse indices are bounded when $f$ belongs to a class of subcritical nonlinearities including $f(u)=(1+u)^p$ with $1<p<\frac{N+2}{N-2}$, and $\lambda_1<\lambda<\lambda_2$ with some $0<\lambda_1<\lambda_2$. We refer to related papers \cite{Ha,Y} dealing with variants of the subcritical case. Here, we mention that 
bounds of $\lambda$ from both above and below are necessary to obtain this property. Indeed, it is known in \cite{JL} that when $\Omega$ is the unit ball $B_1$ and $f(u)=(1+u)^{p}$ with $p<\frac{N+2}{N-2}$, there exists a family of solutions $\{u_{\lambda}\}_{0<\lambda<\lambda^{*}}$ of \eqref{gelfand} with $\lambda^{*}\in (0,\infty)$ such that $m(u_{\lambda}, B_1)=1$ and $\lVert u_{\lambda}\rVert_{L^{\infty}(B_1)}\to \infty$ as $\lambda\to 0$. 

For the supercritical case, in contrast to the subcritical case, the dimension $N$ plays a key role in the uniformly boundedness of finite Morse index solutions. Indeed, when $N\ge 10$,  
it is known in \cite{JL} that there exists a family of solutions $\{u_{\lambda}\}_{0<\lambda<\lambda^{*}}$ with $\lambda^{*}\in (0,\infty)$ such that $m(u_\lambda,B_1)=0$ and 
$\lVert u_{\lambda}\rVert_{L^{\infty}(B_1)}\to\infty$ as $\lambda\to\lambda^{*}$ in the case $\Omega=B_1$ and $f(u)=e^u$. It implies that the uniformly boundedness result is not satisfied for $N\ge 10$ even when $\lambda$ is bounded from both above and below. On the other hand, when $3\le N\le 9$, the uniformly boundedness result is proved by \cite{Fa,Dn,Dn13,Dn8,Dn08} for power-type nonlinearities and exponential-type nonlinearities.
Then, their results are extended by Figalli and Zhang \cite{Figalli} to all supercritical nonlinearities with $3\le N\le 9$. Here, we point out that the authors \cite{Figalli} proved it without imposing any lower bounds of $\lambda$. 

In two-dimensional case, unlike the case $3\le N\le 9$, uniformly boundedness of finite Morse index solutions is not satisfied without imposing a lower bound of $\lambda$ even for the exponential nonlinearity. Indeed, it is known \cite{JL} that when $f(u)=e^u$ and $\Omega=B_1$,
there exists a family of solutions $\{u_{\lambda}\}_{0<\lambda<\lambda^{*}}$ of \eqref{gelfand} with some $\lambda^{*}\in (0,\infty)$ such that $m(u_{\lambda}, B_1)= 1$ and $\lVert u_{\lambda}\rVert_{L^{\infty}(B_1)}\to \infty$ as $\lambda\to 0$.
On the other hand, very recently, the present author \cite{kuma2025} proved the uniformly boundedness result for supercritical nonlinearities without imposing any lower bounds in the case $\Omega=B_1$. More precisely, the author \cite{kuma2025} proved that if $\Omega=B_1$ and 
$f$ satisfies
\begin{equation}
\label{condsuper}
\limsup_{u\to\infty}\left(\frac{F(u)\log F(u)}{f(u)}\right)'<\frac{1}{2} \hspace{2mm} \text{with }F(u)=\int_{0}^{u}f(s)\,ds,
\end{equation}
then the $L^{\infty}$ norms of the solutions to \eqref{gelfand} with is controlled by some $C>0$ depending only on $f$ and the supremum of their Morse indices. Here, we remark that $f=e^{u^p}$ with $p>2$ and $f=e^{e^u}$ satisfy \eqref{condsuper}. For more examples satisfying
\eqref{condsuper}, see \cite[Example 2.1]{kuma2025}. The key idea of \cite{kuma2025} is to obtain an $H^1$ estimate for finite Morse index solutions, and to combine it with a non-existence result of an unstable solution for any small $\lambda>0$. We mention that the non-existence result is derived from the combination of a technical ODE argument and the Pohozaev-type identity provided in \cite{tang}, both of which are only applicable in the case $\Omega=B_1$. Therefore, for general domains,
this approach does not work well; and there are no uniformly boundedness results in the literature for the supercritical case.

The aim of this paper is to prove the 
uniformly boundedness result for all two-dimensional strictly convex domains without imposing any lower bounds of $\lambda$. In order to prove the result, we obtain an $H^1$ estimate for finite Morse index solutions for general strictly convex domains. Moreover, we apply a new method focusing on the interaction between the shape of the fundamental solution of the Laplace equation and the growth condition \eqref{asf2}, instead of the method via the non-existence result. As a result, we prove the following

\begin{theorem}
\label{mainthm}
Let $k\in \mathbb{N}$, $\lambda^{*}>0$ and $\Omega\subset \mathbb{R}^2$ be a bounded strictly convex domain of class $C^2$. We assume that $f$ satisfies \eqref{asf1} and \eqref{asf2} with some $u_0>0$ and $M>0$. We assume in addition that there exists a nonnegative continuous function $g$ such that $f\le g$ for all $u>0$. Let $u$ be a solution of \eqref{gelfand}
for some $0<\lambda<\lambda^{*}$. Then, 
\begin{equation*}
\label{mainest}
    \lVert u \rVert_{L^{\infty}(\Omega)}\le C(\Omega, \lambda^{*},g,k, M, u_0).
\end{equation*}
\end{theorem}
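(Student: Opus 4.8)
The plan is to argue by contradiction through a concentration analysis, playing the two pillars announced in the introduction against each other: a uniform energy bound valid for \emph{all} solutions of bounded Morse index, and an obstruction coming from the logarithmic profile of the two-dimensional fundamental solution together with the super-exponential growth forced by \eqref{asf2}. Suppose the conclusion fails. Then there is a sequence of solutions $u_n$ of \eqref{gelfand} with $\lambda_n\in(0,\lambda^{*})$, Morse index at most $k$, and $M_n:=\lVert u_n\rVert_{L^\infty(\Omega)}\to\infty$; since $f>0$, the maximum principle places $M_n=u_n(x_n)$ at an interior point $x_n$. The first step is the $H^1$ estimate. Testing the quadratic form $Q_{u_n}$ with functions adapted to the growth of $f$ — the factor $\log(e+f(u))$ in \eqref{asf2} suggests building test functions out of $\log(e+f(u_n))$ rather than powers of $u_n$ — and exploiting the convexity $f''\ge0$ from \eqref{asf1}, the nonnegativity of $Q_{u_n}$ on a subspace of codimension $k$ should yield, with the $k$ negative directions contributing only a controlled error,
\begin{equation*}
\int_\Omega \lambda_n f(u_n)\,u_n\,dx \le C(k,M,u_0),
\qquad\text{whence}\qquad
\int_\Omega \lambda_n f(u_n)\,dx \le C .
\end{equation*}

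For the second step I would first use the strict convexity of $\Omega$: via a moving-planes / boundary-gradient argument one rules out boundary concentration and keeps $x_n$ at a definite distance from $\partial\Omega$, so that a genuine interior analysis at the peak applies. The heart of the matter is then the Green representation
\begin{equation*}
M_n = u_n(x_n) = \lambda_n\int_\Omega G(x_n,y)f(u_n(y))\,dy,
\qquad
G(x,y)=\frac{1}{2\pi}\log\frac{1}{|x-y|}+h(x,y),
\end{equation*}
with $h$ bounded on a convex $C^2$ domain. The regular part $h$ and the contribution of the region $\{u_n\le L\}$ are harmless: using $f\le g$ there one bounds them by $\lambda_n g(L)\lVert\log|x_n-\cdot|\,\rVert_{L^1(\Omega)}\le C(\Omega,\lambda^{*},g,L)$, which is exactly the role of the majorant $g$. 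Consequently the divergence $M_n\to\infty$ must be produced by the logarithmic kernel acting on the high region, forcing the mass that generates the height to sit at scale $\delta_n=e^{-cM_n}$, where $u_n$ is of order $M_n$. The growth condition \eqref{asf2} (equivalently the lower bound $f(u)\ge\exp(c\exp(u/M))-e$ of Lemma \ref{grlem}) guarantees that $\int f(u_n)$ is dominated by these highest values, so the same mass, reweighted by $u_n\sim M_n$, makes $\int_\Omega\lambda_n f(u_n)u_n\,dx\gtrsim M_n\to\infty$, contradicting the Step~1 bound. This is precisely the advertised interaction between the shape of the fundamental solution and \eqref{asf2}, and it replaces the ball-only non-existence/Pohozaev argument of \cite{kuma2025}.

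The main obstacle is making the second step quantitative. One must show rigorously that, under \eqref{asf2}, the logarithmically weighted mass that builds the peak is concentrated at the level $u_n\approx M_n$ and at the scale $\delta_n\approx e^{-cM_n}$, and that the low- and far-field contributions cannot secretly account for the full height. Balancing the \emph{integrable} (merely logarithmic) singularity of $G$ against the \emph{super-exponential} size of $f(u_n)$ is delicate because both are large near $x_n$; this is the two-dimensional borderline phenomenon in which $e^{u}$ is Moser–Trudinger critical, so that any faster-than-exponential $f$ is supercritical and the log kernel can no longer absorb it. Controlling this balance uniformly in $n$ — rather than passing to a blow-up limit, where the rescaled profile converges to the fundamental solution $\frac{1}{2\pi}\log\frac{1}{|z|^{2}}$ — is where the analytic work, and the dependence of the constant on $\Omega,\lambda^{*},g,k,M,u_0$, will concentrate.
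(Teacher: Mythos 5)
Your overall frame (contradiction, moving planes to keep the blow-up point interior, Green representation, and the announced ``log kernel versus \eqref{asf2}'' interaction) matches the paper, and your Step 1 is essentially the paper's Proposition \ref{h1prop}. But Step 2 --- the heart of the proof --- is left as a plan, and the specific mechanism you propose does not close. You claim that the height $M_n$ must be produced by mass sitting at scale $e^{-cM_n}$ \emph{where $u_n\sim M_n$}, so that reweighting by $u_n$ contradicts $\int_\Omega \lambda_n f(u_n)u_n\,dx\le C$. The first half is fine: since $\int_\Omega\lambda_n f(u_n)\,dx\le C$, the log-weighted mass must live at exponentially small scales. The second half does not follow, and this is exactly where the argument fails quantitatively: because $f$ is \emph{super}-exponential, largeness of $f(u_n)$ forces only a very weak lower bound on $u_n$. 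Concretely, take $f(u)=e^{e^u}$ and consider the scenario where mass $m_n\approx M_n^{-1}$ sits at scale $r_n=e^{-M_n^2}$ from $x_n$: then $G(x_n,\cdot)\,m_n\approx \frac{M_n^2}{2\pi}\,m_n\approx M_n$, so this tiny mass produces the full height; there $f(u_n)\approx m_n r_n^{-2}$, hence $u_n=\log\log f(u_n)\approx 2\log M_n$ only, and the reweighted integral over that region is $\approx m_n\log M_n\to 0$ --- no contradiction with the energy bound. Ruling out such thin, multi-scale concentration is a genuine blow-up analysis, which you explicitly acknowledge you have not supplied; it is not a technicality but the entire difficulty.

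The paper closes this step with no concentration analysis at all, via a pointwise dichotomy you are missing, and the decisive ingredient is the Morse-index $L^1$ bound on $\lambda f'(u)$ over interior balls (Lemma \ref{basiclem}), not the bounds on $\lambda f(u)$ or $\lambda f(u)u$ that you use. After a compactness step (Proposition \ref{superprop}) reduces matters to at most $k$ concentration points with $C^2_{\mathrm{loc}}$ convergence elsewhere, one writes Green's formula on a small ball $B_{2r}(y)$ and splits the singular integral according to $|x-y|f(u_i)\le 1$ or $\ge 1$: on the first set
\begin{equation*}
-\lambda_i f(u_i)\log|x-y|\le -\lambda^{*}\,\frac{\log|x-y|}{|x-y|},
\end{equation*}
which is uniformly integrable in two dimensions; on the second set $-\log|x-y|\le\log f(u_i)$, so the integrand is at most $\lambda_i f(u_i)\log f(u_i)\le M\lambda_i f'(u_i)$ by \eqref{asf2}, and Lemma \ref{basiclem} bounds $\int_{B_{2r}(y)}\lambda_i f'(u_i)\,dx$ by $C(k)$. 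This is entirely local and pointwise and needs no information about where, at what scale, or at what level the mass concentrates. If you want to rescue your outline, replace the reweighting-against-energy step by this dichotomy: feed the $L^1$ bound on $f'$ (which your quadratic-form testing already yields on interior balls with plain cutoffs) into the Green representation through \eqref{asf2}.
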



Our result plays an important role to study the bifurcation structure of the Gelfand problem \eqref{gelfand}. Indeed, as a result of Theorem \ref{mainthm} and the analytic bifurcation theory provided in \cite[Section 2.1]{BD}, we obtain the following
\begin{theorem}
\label{mainthm2}
Let $\Omega\subset \mathbb{R}^2$ be a bounded strictly convex domain of class $C^2$. We assume that $f$ satisfies \eqref{asf1} and \eqref{asf2} with some $u_0>0$ and $M>0$. Moreover, we suppose that $f$ can be analytically extended to $\mathbb{R}$. Let
\begin{align}
\mathcal{S}:=\left\{(\lambda,u)\in\mathbb{R}_{+}\times C^{1}_{0}(\overline{\Omega}) : \, 
\begin{array}{ll}
-\Delta u=\lambda f(u)\hspace{2mm}\text{and  $-\Delta-\lambda f'(u)$ }\\ 
\text{is invertible with bounded inverse} 
\end{array}
\right\}. \notag
\end{align}
Then, there exists a continuous map
\begin{equation*}
 \mathbb{R}_{+}\ni s \mapsto h(s)=(\lambda(s) ,u(s))\in \overline{\mathcal{S}}\subset
 \mathbb{R}_{+}\times C^{1}_{0}(\overline{\Omega})
\end{equation*}
such that the following are satisfied.
\begin{enumerate}
    \item [{(i)}] 
    $\lambda(s)\to 0$ and $\lVert u(s)\rVert_{L^{\infty}(\Omega)}\to 0$ as $s\to 0$. Moreover, $m(u(s),\Omega)\to \infty$ and  $\lVert u(s) \rVert_{L^{\infty}(\Omega)}\to \infty$ as $s\to\infty$.
\item [{(ii)}] $h^{-1}(\overline{\mathcal{S}}\setminus \mathcal{S})$ is a discrete set. In addition, 
$h$ is injective in $h^{-1}(s)$ with $\lambda'(s)\neq 0$, and $h$ is analytic for each $s\in h^{-1}(\mathcal{S})$.
\item [{(iii)}] For each $s_0\in h^{-1}(\overline{\mathcal{S}}\setminus \mathcal{S})$, there exists a continuous and injective map $\eta:[-1,1]\to \mathbb{R}$ such that $\eta(0)=s_0$ and the following reparameterization
\begin{equation*}
    (-1,1)\ni t \to (\lambda(\eta(t)), u(\eta(t))) \in \overline{\mathcal{S}}
\end{equation*}
is analytic and the derivatives do not vanish except at $t=0$.
\item[{(iv)}] There exists a sequence $\{s_i\}_{i\in \mathbb{N}}\subset h^{-1}(\overline{\mathcal{S}}\setminus\mathcal{S})$ such that $s_i\to\infty$ and $h(s_i)$ is a  bifurcation point or a turning point for each $i$.
\end{enumerate}
Here, we say that $h(s_0)$ is a bifurcation point 
if there exists $\varepsilon>0$ such that
every neighborhood of $h(s)$ contains a solution of \eqref{gelfand} which is not in $h((s-\varepsilon, s+\varepsilon))$. In addition, we say that $h(s)$ is a turning  point if $h(s)$ is not a bifurcation point and $\lambda$ is not injective for each neighborhood of $s$.
\end{theorem}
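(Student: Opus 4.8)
The plan is to obtain Theorem \ref{mainthm2} as an application of the analytic global bifurcation machinery of \cite[Section 2.1]{BD}, using Theorem \ref{mainthm} to pin down the global shape of the branch. First I would recast \eqref{gelfand} as an operator equation $F(\lambda,u)=u-\lambda K[f(u)]=0$ on $\mathbb{R}_+\times X$ with $X=C^1_0(\overline{\Omega})$, where $K=(-\Delta)^{-1}$ is the Dirichlet solution operator. Since $f$ extends analytically to $\mathbb{R}$ and $K$ is compact from $C^0$ into $C^1_0(\overline{\Omega})$, the map $F$ is real-analytic of the form identity-minus-compact, so $D_uF=I-\lambda K[f'(u)\,\cdot\,]$ is Fredholm of index zero and the set $\overline{\mathcal{S}}\setminus\mathcal{S}$ is exactly where $D_uF$ fails to be invertible. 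The remaining hypothesis of the abstract theorem, properness of $F$ on sets where $\lambda$ is bounded, follows from the compactness of $K[f(\cdot)]$ together with elliptic regularity.

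Second, I would identify the initial arc. For $\lambda$ near $0^+$ the minimal solution $u_\lambda$ exists, depends analytically on $\lambda$, satisfies $m(u_\lambda,\Omega)=0$ with boundedly invertible linearization, and obeys $u_\lambda\to 0$ as $\lambda\to 0^+$; this is a nondegenerate point of $\mathcal{S}$ and already yields the $s\to 0$ assertions in (i). Feeding this starting point into \cite[Section 2.1]{BD} produces the continuous, piecewise-analytic curve $h(s)=(\lambda(s),u(s))$ together with the local structure recorded in (ii) and (iii): the singular set $h^{-1}(\overline{\mathcal{S}}\setminus\mathcal{S})$ is discrete, $h$ is locally injective and analytic with $\lambda'\neq 0$ at regular parameters, and each singular parameter admits the analytic reparameterization $\eta$.

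Third, I would determine the behavior as $s\to\infty$. The abstract theorem gives the dichotomy that $h$ is either a closed loop or unbounded (equivalently, it leaves every compact subset of the admissible region). The loop alternative is excluded because the curve emanates from the boundary point $(0,0)$, where $\lambda\to 0^+$ and the linearization is $-\Delta$, and such an endpoint cannot lie on a periodic orbit. By the classical theory of the Gelfand problem (here $f(0)>0$ with $f'\ge 0$, $f''\ge 0$) there is a finite extremal parameter $\lambda^{*}$, so $\lambda(s)\le\lambda^{*}$ for all $s$; combined with the compactness of the set of solutions with $\lambda\le\lambda^{*}$ and $\lVert u\rVert_{L^\infty(\Omega)}\le R$, together with the properness of the construction, unboundedness of the branch forces $\lVert u(s)\rVert_{L^\infty(\Omega)}\to\infty$. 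Theorem \ref{mainthm} then performs the essential step: for each $k$ it supplies a constant $C_k$ with $m(u,\Omega)\le k\Rightarrow\lVert u\rVert_{L^\infty(\Omega)}\le C_k$, so since $\lVert u(s)\rVert_{L^\infty(\Omega)}$ eventually exceeds every $C_k$ we conclude $m(u(s),\Omega)\to\infty$, finishing (i).

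Finally, (iv) follows by counting Morse-index jumps. If the singular set were finite, then beyond the last singular parameter the Morse index would be constant, contradicting $m(u(s),\Omega)\to\infty$; hence there are infinitely many singular parameters, and I may select $s_i\to\infty$ across which the index strictly increases. At each such point $-\Delta-\lambda f'(u)$ has nontrivial kernel, and the index change forces either a fold of $\lambda$, so that $\lambda$ is not locally injective and $h(s_i)$ is a turning point, or, when $\lambda$ stays locally monotone, the appearance of solutions off the curve, so that $h(s_i)$ is a bifurcation point. I expect the main obstacle to be the passage from the abstract unboundedness alternative to the concrete statement $\lVert u(s)\rVert_{L^\infty(\Omega)}\to\infty$: this requires combining the a priori bound $\lambda\le\lambda^{*}$, the compactness of bounded solution sets, and the exclusion of the loop, and it is precisely here that Theorem \ref{mainthm} is indispensable, since it is what converts the $L^\infty$-blow-up into the Morse-index blow-up that drives the infinitude of singular points.
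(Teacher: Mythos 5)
Your proposal follows essentially the same route as the paper's proof: the paper also recasts \eqref{gelfand} as the operator equation $\mathcal{F}(\lambda,u)=u+\lambda\Delta^{-1}[f(u)]=0$ on the positive cone in $C^1_0(\overline{\Omega})$, proves nonexistence of solutions for $\lambda>\lambda^{*}$, and then invokes Theorem \ref{mainthm} together with the analytic bifurcation theory of \cite[Section 2.1]{BD} and the argument of \cite[Theorem 1]{Dan}. Your sketch is in effect an unpacking of that last citation, and its global structure (minimal branch as starting point, exclusion of the loop, the bound $\lambda\le\lambda^{*}$ plus compactness forcing $L^{\infty}$ blow-up, Theorem \ref{mainthm} converting $L^{\infty}$ blow-up into Morse-index blow-up, and index jumps producing infinitely many singular points) is the intended one. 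Two of your steps, however, are not justified as stated.

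First, finiteness of the extremal parameter does \emph{not} follow from ``$f(0)>0$, $f'\ge 0$, $f''\ge 0$'' alone: $f\equiv 1$ satisfies these and \eqref{gelfand} is then solvable for every $\lambda>0$. One needs superlinearity of $f$, which in this paper comes from \eqref{asf2} via Lemma \ref{grlem} (giving $f(u)\ge e^{ce^{u/M}}-e$); the paper then concludes $\lambda^{*}<\infty$ by citing \cite[Proposition 3.3.1]{Dup}. This is a one-line fix, but as written the step does not follow from the hypotheses you invoke. Second, the last assertion in your treatment of (iv) --- that a Morse-index change at a singular point where $\lambda$ is locally injective forces solutions off the curve --- is precisely the nontrivial point, and you assert it without proof. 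The natural Leray--Schauder argument (constancy of the local degree $(-1)^{m}$ along a non-bifurcating arc that is a graph over $\lambda$) only excludes index jumps of \emph{odd} parity; if the kernel at the singular point has dimension at least two, the index may a priori jump by an even amount and the parity argument is silent. Closing this case requires the variational structure of the problem (a bifurcation theorem for potential operators in the spirit of B\"ohme--Marino--Kielh\"ofer, applied after a Lyapunov--Schmidt reduction that preserves the gradient structure), and this is exactly what is hidden in the paper's citation of \cite[Theorem 1]{Dan}. Your outline is correct, but this is the one place where a genuine idea, rather than bookkeeping, still needs to be supplied.
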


This problem is extensively studied for a number of authors. We refer to \cite{Br, B, Bre, CFRS, Dup,cabres} for the study of the stable branch, and refer to \cite{korman, Mi2018, Mi2023, cabrecapella, Mi2024, kuma2025} for the global bifurcation curve in the case $\Omega=B_1$. 
In particular, when $2\le N\le 9$ and $\lim_{u\to\infty} f(u)/u=\infty $, it is known that there exists a stable branch emanating from $(0,0)$, going to $\lambda=\lambda^{*}\in (0,\infty)$, where no solution exists for $\lambda>\lambda^{*}$. Moreover, the curve $(\lambda(s), u(s))$ obtained in Theorem \ref{mainthm2} contains the stable branch and it turns at $\lambda=\lambda^{*}$.
 
In addition, when $3\le N\le 9$ and $\Omega$ is a convex domain, the existence of infinitely many bifurcation/turning points is proved in \cite{Fa,Dn,Dn13,Dn8,Dn08} for exponential-type nonlinearities and power-type nonlinearities. 
These results are obtained by proving the uniformly boundedness of finite Morse index solutions with a lower bound of $\lambda$, and then using the non-existence results \cite{DS,Sch} of an unstable solution with any small $\lambda>0$. Here, the non-existence results \cite{DS, Sch} are proved by essentially using the principal part $(N-2)\lVert \nabla u \rVert^2_{L^2(\Omega)}$ of the Pohozaev identity. However, in two-dimensional case, since the principal part vanishes, it is a non-trivial problem whether the non-existence result is satisfied for general domains, and it is proved in \cite{kuma2025} only for the case $\Omega=B_1$ by using a technical ODE argument.

Instead of obtaining the non-existence result, we succeed in directly proving the uniformly boundedness of finite Morse index solutions  without any lower bounds of $\lambda$. For the supercritical case, it is the first result clarifying the bifurcation structure for general domains in two dimensions to the best of the author's knowledge.

\section{Proof of Theorem \ref{mainthm} and Theorem \ref{mainthm2}}
The main purpose of this section is to prove Theorem \ref{mainthm} and Theorem \ref{mainthm2}. We start from proving the following
\begin{lemma}
\label{grlem}
We assume that $f$ satisfies \eqref{asf1} and \eqref{asf2} with some $M>0$ and $u_0>0$. Then, 
it follows that
\begin{equation*}
f(u)\ge e^{ce^{u/M}}-e \hspace{4mm} \text{for all $u>u_0$},   
\end{equation*}
where $c>0$ is a constant depending only on $u_0$ and $M$. 
\end{lemma}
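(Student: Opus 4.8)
The plan is to convert the pointwise inequality \eqref{asf2} into an autonomous differential inequality and then integrate it twice. The natural substitution is $g(u):=e+f(u)$, for which $g'=f'$ and, crucially, $g(u)\ge e>1$ by \eqref{asf1}, so that $\log g(u)\ge 1>0$ and $\log\log g(u)\ge 0$ are both well defined and finite. With this notation \eqref{asf2} reads
\[
g(u)\log g(u)\le M\,g'(u)\qquad\text{for all }u>u_0,
\]
which I would rewrite as the lower bound $\dfrac{g'(u)}{g(u)\log g(u)}\ge \dfrac{1}{M}$.

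The key observation is that the left-hand side is an exact derivative: since $\frac{d}{du}\log g=g'/g$ and $g$ is $C^2$ with $g\ge e$, one has
\[
\frac{d}{du}\log\log g(u)=\frac{g'(u)}{g(u)\log g(u)}\ge\frac{1}{M}.
\]
Integrating this over $[u_0,u]$ gives $\log\log g(u)\ge \log\log g(u_0)+\dfrac{u-u_0}{M}$. Exponentiating once yields
\[
\log g(u)\ \ge\ \log g(u_0)\,\exp\!\Big(\frac{u-u_0}{M}\Big)\ =\ c\,e^{u/M},
\qquad c:=\log\bigl(e+f(u_0)\bigr)\,e^{-u_0/M},
\]
and exponentiating a second time gives $g(u)\ge e^{c e^{u/M}}$, i.e.\ $f(u)=g(u)-e\ge e^{c e^{u/M}}-e$, as claimed. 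Finally $c>0$ because $f(u_0)\ge f(0)>0$ (using $f'\ge0$ from \eqref{asf1}), so $\log(e+f(u_0))>0$, and $c$ manifestly depends only on $u_0$ and $M$.

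This argument is essentially routine once the substitution $g=e+f$ is made, and I do not expect a genuine obstacle. The only points requiring a little care are (i) checking that $\log\log g$ is legitimately defined and differentiable, which is guaranteed by the uniform bound $g\ge e$ built into the definition of $g$ rather than assumed, and (ii) confirming strict positivity of the constant $c$, which is where the hypotheses $f(0)>0$ and $f'\ge0$ from \eqref{asf1} are used. Both are handled by the elementary monotonicity just noted, so the estimate follows directly from the integration above.
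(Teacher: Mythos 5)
Your proof is correct and takes essentially the same route as the paper: both rewrite \eqref{asf2} as $\frac{1}{M}\le\frac{d}{du}\log\log(e+f(u))$ and integrate over $(u_0,u)$, then exponentiate twice. One small repair: your constant $c=\log\bigl(e+f(u_0)\bigr)e^{-u_0/M}$ depends on $f(u_0)$ and hence on $f$, not only on $u_0$ and $M$ as the lemma requires; since $\log\bigl(e+f(u_0)\bigr)\ge 1$, you may simply replace $c$ by the smaller constant $e^{-u_0/M}$, which only weakens the lower bound and yields the stated dependence.
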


\begin{proof}
The condition \eqref{asf2} can be rewritten as
\begin{equation*}
    \frac{1}{M}\le \frac{f'(u)}{(e+f(u))\log (e+f(u))}=\frac{d}{du}(\log(\log(e+f(u))).
\end{equation*}
Let $u>u_0$. By integrating the above on $(u_0, u)$, we obtain
\begin{equation*}
\frac{u-u_0}{M}\le \log((\log(e+f(u)))-\log(\log(e+f(u_0))).
\end{equation*}
Thus, the desired result follows.
\end{proof}

Next, we quote an $L^1$ estimate of $f'(u)$ for finite Morse index solutions.
\begin{lemma}[see \cite{Figalli, kuma2025}]
\label{basiclem}
Let $k\in\mathbb{N}$ and $\Omega\subset \mathbb{R}^2$ be a bounded 
domain. We assume that $f$ satisfies \eqref{asf1}. Let $u$ be a solution of \eqref{gelfand} satisfying $m(u,\Omega)\le k$.
Then, for any $B_{2r}(x)\subset\subset \Omega$, we have
\begin{equation*}
    \int_{B_r(x)} \lambda f'(u)\,dx \le C(k), 
\end{equation*}
where $C(k)$ is a constant depending only on $k$.
\end{lemma}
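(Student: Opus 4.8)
The plan is to extract geometric information from the Morse index bound through the quadratic form $Q_u$, exploiting the conformal (scale--invariant) nature of the Dirichlet energy in two dimensions. The starting observation is a local instability criterion: if a ball $B_\rho(y)$ with $B_{2\rho}(y)\subset\subset\Omega$ carries too much mass of $\lambda f'(u)$, then it supports a direction on which $Q_u$ is negative. Concretely, take $\eta\in C^1_c(B_{2\rho}(y))$ with $\eta\equiv 1$ on $B_\rho(y)$, $0\le\eta\le 1$ and $|\nabla\eta|\le 2/\rho$. Using $f'\ge 0$ from \eqref{asf1}, one gets
\begin{equation*}
Q_u(\eta)=\int_{\Omega}\big(|\nabla\eta|^2-\lambda f'(u)\eta^2\big)\,dx \le \frac{4}{\rho^2}\,|B_{2\rho}(y)\setminus B_\rho(y)| - \int_{B_\rho(y)}\lambda f'(u)\,dx \le C_0-\int_{B_\rho(y)}\lambda f'(u)\,dx,
\end{equation*}
with $C_0=12\pi$ a universal constant. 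The crucial point, special to $N=2$, is that the factor $\rho^{-2}$ is exactly compensated by $|B_{2\rho}(y)|\sim\rho^2$, so that $C_0$ is independent of the radius $\rho$. Hence any ball with $\int_{B_\rho(y)}\lambda f'(u)>C_0$ hosts a negative direction of $Q_u$.

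Next I would convert the Morse index bound into a packing statement. Since $m(u,\Omega)\le k$ and functions with pairwise disjoint supports are $Q_u$--orthogonal, no family of $k+1$ balls with pairwise disjoint doubled balls $B_{2\rho_i}(y_i)\subset\subset\Omega$ can simultaneously satisfy $\int_{B_{\rho_i}(y_i)}\lambda f'(u)>C_0$; otherwise the associated cutoffs $\eta_i$ would span a $(k+1)$--dimensional subspace of $C^1_c(\Omega)$ on which $Q_u<0$, contradicting the definition of the Morse index. To pass from this packing bound to the pointwise density bound $\int_{B_r(x)}\lambda f'(u)\le C(k)$, I would run a covering/stopping--time scheme: for $y\in\overline{B_r(x)}$ let $s(y)$ be the smallest radius with $\int_{B_{s(y)}(y)}\lambda f'(u)=2C_0$ (finite because $u$ is a fixed classical solution, so $\lambda f'(u)$ is locally bounded), apply a Vitali selection to $\{B_{2s(y)}(y)\}$ to obtain a disjoint subfamily whose tenfold dilates still cover, and use the packing bound to control the number of these critical balls by $k$. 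The remaining mass, sitting where the local density stays below the threshold, is then estimated directly from the cutoff inequality above.

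The main obstacle is precisely the closure of this final covering step, and it reflects a genuine two--dimensional phenomenon rather than a bookkeeping difficulty. Unlike dimensions $N\ge 3$, where the number of negative eigenvalues of $-\Delta-\lambda f'(u)$ is controlled by $\int(\lambda f'(u))^{N/2}$ through a Cwikel--Lieb--Rozenblum bound, in $N=2$ a large mass of $\lambda f'(u)$ concentrating at a single scale produces only one negative direction: nested unstable balls do not yield disjointly supported negative directions, since the intervening annuli carry no mass and hence give $Q_u\ge 0$. Consequently the instability criterion alone cannot rule out the concentration of $\int_{B_r(x)}\lambda f'(u)$ into arbitrarily small balls, and the covering argument must be made quantitative, combining the scale invariance above with the local structure of the solution to show that each concentration point can absorb only a bounded amount of mass. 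This is the delicate ingredient I would import from the analysis in \cite{Figalli, kuma2025}; once it is in place, everything else reduces to the elementary cutoff computation and the linear--algebra packing argument.
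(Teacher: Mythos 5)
Your first two steps are correct, and they are indeed the natural opening moves: the fixed-ratio cutoff computation showing that any ball $B_\rho(y)$ with $B_{2\rho}(y)\subset\subset\Omega$ and $\int_{B_\rho(y)}\lambda f'(u)>12\pi$ carries a negative direction of $Q_u$ (with the two-dimensional scale invariance making the threshold independent of $\rho$), and the disjoint-support packing argument showing that at most $k$ such balls with pairwise disjoint doubles can exist. But the proof does not close, and you say so yourself: in the Vitali/stopping-time step, the stopping radius $s(y)$ need not be comparable at nearby points, so the mass of the tenfold dilates of the selected balls is not controlled by the stopping threshold; equivalently, mass descending through nested scales toward a point never produces disjointly supported \emph{ball} cutoffs. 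Deferring precisely this step to \cite{Figalli, kuma2025} removes the entire content of the lemma: for a fixed classical solution the integral $\int_{B_r(x)}\lambda f'(u)\,dx$ is trivially finite, so the only issue is a bound independent of $u$, $\lambda$ and $f$, and that is exactly what the unproved step must deliver. (Note also that the paper itself gives no proof here --- the lemma is quoted from \cite{Figalli, kuma2025} --- so there is no internal argument to compare against; judged as a standalone proof, yours is incomplete. Incidentally, the CLR inequality you invoke for $N\ge 3$ runs in the opposite direction, bounding the index by the mass, so it is not the reason the higher-dimensional case is easier.)

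Moreover, your diagnosis that ``the instability criterion alone cannot rule out concentration'' is too pessimistic; the missing idea stays entirely inside the variational framework you set up. You only used test functions equal to $1$ on balls, but in two dimensions a cutoff equal to $1$ on an annular region $B_R(z)\setminus B_\rho(z)$, vanishing outside $B_{2R}(z)$ and inside $B_{\rho/2}(z)$, has Dirichlet energy bounded by a universal constant \emph{independently of the ratio} $R/\rho$, since each transition layer is taken at a fixed ratio of its own scale. Consequently, if a large mass $M$ concentrates toward a point, one decomposes its neighborhood into dyadic (or descent-generated) annular regions, groups consecutive ones into blocks each carrying at least the threshold mass, and separates the blocks by buffer layers: the resulting annular cutoffs are pairwise disjointly supported negative directions, of which there are on the order of $M$ divided by a universal constant. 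So concentration at a point is in fact visible to $Q_u$ --- through annular, not ball, test functions --- and combining this multi-scale block construction with your ball packing (which limits to $k$ the number of concentration points, handled by an induction on the index when the mass splits into separated pieces) is what yields $\int_{B_r(x)}\lambda f'(u)\,dx\le C(k)$. This annular ingredient, together with the accompanying bookkeeping, is what your proposal is missing and cannot simply be outsourced to the references.
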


Using this lemma and the moving plane method, we obtain the following $H^1$ estimate of finite Morse index solutions.

\begin{proposition}
\label{h1prop}
Let $k\in \mathbb{N}$, $\lambda^{*}>0$ and $\Omega\subset \mathbb{R}^2$ be a bounded strictly convex domain of class $C^2$. Assume that $f$ satisfies \eqref{asf1} and \eqref{asf2} with some $u_0>0$ and $M>0$. We assume in addition that 
there exists a nonnegative function $g\in C^0(\mathbb{R})$ such that $f(t)\le g(t)$ for all $t\ge 0$. Let $u\in C^2(\Omega)$ be a solution of \eqref{gelfand} satisfying $m(u,\Omega)\le k$ and $\lambda<\lambda^{*}$. Then, we have 
\begin{equation*}
    \lVert u\rVert_{H^1(\Omega)}\le C(k, \Omega, M, g, u_0, \lambda^{*}).
\end{equation*}
\end{proposition}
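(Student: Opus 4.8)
The plan is to reduce the $H^1$ bound to an estimate on the energy integral $\int_\Omega \lambda f(u)u\,dx$, and then to control that integral by combining the interior $L^1$ estimate of Lemma \ref{basiclem} with a reflection comparison near $\partial\Omega$ furnished by the moving plane method. First I would record the pointwise consequence of the growth hypotheses: combining \eqref{asf2} with Lemma \ref{grlem} yields a constant $C_0=C_0(M,u_0)$ with
\begin{equation*}
f(u)\,u \le C_0\,f'(u)\qquad\text{for all } u>u_0.
\end{equation*}
Indeed, Lemma \ref{grlem} gives $\log(e+f(u))\ge c\,e^{u/M}$, so $u/\log(e+f(u))\le u/(c\,e^{u/M})$ is bounded on $(0,\infty)$; hence $f(u)u\le C\,f(u)\log(e+f(u))\le C\,(e+f(u))\log(e+f(u))\le CM f'(u)$ by \eqref{asf2}. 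This is the mechanism that converts an $f'$-estimate into an estimate for the energy density, while the threshold $u_0$ and the majorant $g$ are there to absorb the bounded region $\{u\le u_0\}$, on which $f(u)u\le (\max_{[0,u_0]}g)\,u_0$ crudely.

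Next, testing the equation with $u$ (legitimate for the classical solution vanishing on $\partial\Omega$) gives $\|\nabla u\|_{L^2(\Omega)}^2=\int_\Omega \lambda f(u)u\,dx$, and Poincar\'e's inequality bounds $\|u\|_{L^2}$ by $\|\nabla u\|_{L^2}$; so it suffices to bound $\int_\Omega \Phi(u)\,dx$ with $\Phi(u):=\lambda f(u)u$, which is nonnegative and nondecreasing for $u\ge 0$. On an interior core $K=\{x:\mathrm{dist}(x,\partial\Omega)\ge\delta_0\}$ I would cover $K$ by finitely many balls $B_r(x_i)$, $i=1,\dots,N(\Omega)$, with $B_{2r}(x_i)\subset\Omega$; on each ball, splitting into $\{u\le u_0\}$ and $\{u>u_0\}$ and using the majorant $g$ on the first piece and the pointwise inequality above together with Lemma \ref{basiclem} on the second, one obtains $\int_{B_r(x_i)}\Phi(u)\,dx\le C(g,u_0,\lambda^{*})+C_0\,C(k)$, and hence $\int_K\Phi(u)\,dx\le C(k,\Omega,M,g,u_0,\lambda^{*})$.

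The remaining step is to control the boundary strip $\Omega_\delta=\{x:\mathrm{dist}(x,\partial\Omega)<\delta\}$. Here I would invoke the moving plane method: since $\Omega$ is strictly convex and of class $C^2$, and $u>0$ solves $-\Delta u=\lambda f(u)$ with $f$ nondecreasing, there is $\delta>0$ depending only on $\Omega$ so that near each boundary point $u$ increases along the inward reflection. Thus finitely many caps $\{\mathcal{C}_j\}$ cover $\Omega_\delta$, and for each the reflection $\sigma_j$ about the critical hyperplane is an isometry mapping $\mathcal{C}_j$ into $K$ with $u(x)\le u(\sigma_j x)$. Since $\Phi$ is nondecreasing and each $\sigma_j$ has unit Jacobian, this yields
\begin{equation*}
\int_{\Omega_\delta}\Phi(u)\,dx \le \sum_j \int_{\mathcal{C}_j}\Phi(u(\sigma_j x))\,dx = \sum_j \int_{\sigma_j(\mathcal{C}_j)}\Phi(u)\,dy \le N'(\Omega)\int_K \Phi(u)\,dy,
\end{equation*}
and combining with the core estimate closes the argument. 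The hard part is precisely this geometric step: making the width $\delta$, the number of caps, and the fact that each reflected cap lands in a fixed interior core all uniform over the strictly convex $C^2$ domain $\Omega$, and verifying the required monotonicity (via Hopf's lemma to start each reflection). Once that uniform reflection structure is established, the rest is bookkeeping.
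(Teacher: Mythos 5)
Your proposal is correct and follows essentially the same route as the paper: test the equation with $u$, convert $\lambda f(u)u$ into $\lambda f'(u)$ via the pointwise inequality coming from \eqref{asf2} and Lemma \ref{grlem}, bound the interior contribution by Lemma \ref{basiclem} plus a covering argument, control the boundary strip by the moving plane method, and finish with Poincar\'e. The only (harmless) difference is one of bookkeeping: the paper first reduces the full energy to $\int_\Omega \lambda f'(u)\,dx$ and applies the moving-plane reflection to the nondecreasing quantity $f'(u)$, whereas you apply the reflection directly to $\lambda f(u)u$, which works equally well since that quantity is also nondecreasing in $u$.
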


\begin{proof}
Let $u$ be a solution of of \eqref{gelfand} satisfying $m(u,\Omega)\le k$. We choose $u_1>u_0$ depending only on $u_0$ and $M$ so that
\begin{equation*}
    \frac{u}{ce^{u/M}}\le 1 \hspace{4mm}\text{for all $u>u_1$},
\end{equation*}
where $c>0$ is that in Lemma \ref{grlem}. Then, it follows from \eqref{asf2} and Lemma \ref{grlem} that 
\begin{align*}
  \int_{\Omega}|\nabla u|^2\,dx&=   \int_{\Omega}\lambda f(u)u\,dx\\
  &\le \int_{\{u>u_1\}} \lambda M uf'(u)(\log (e+f(u)))^{-1}\,dx + \lambda^{*} g(u_1)u_1|\Omega|\\
  &\le\int_{\{u>u_1\}} \lambda M uf'(u)(ce^{u/M})^{-1}\,dx+ \lambda^{*} g(u_1)u_1|\Omega|\\
  &\le M\int_{\Omega}\lambda f'(u)\,dx+C(\Omega, g, u_0, \lambda^{*}, M).
\end{align*}
Let $d>0$ and we define $\Omega_{d}:=\{x\in \Omega:\mathrm{dist}(x,\partial\Omega)>d\}$.
Then, if $d$ is sufficiently small, we deduce from the moving plane method and the monotonicity of $f'$ that
\begin{equation*}
\int_{\Omega\setminus\Omega_d}\lambda f'(u)\,dx\le C(\Omega_d)\int_{\Omega_d} \lambda f'(u)\,dx.
\end{equation*}
We fix $d$ sufficiently small such that the above estimate is satisfied. Then, thanks to Lemma \ref{basiclem} and a covering argument, we have
\begin{equation*}
  \int_{\Omega_d} \lambda f'(u)\,dx\le C(k,\Omega_d). 
\end{equation*}
Therefore, by combining the above estimates and using the Poincar\'e inequality, we get the result.
\end{proof}
\begin{remark}
\label{imprem}
\rm{When $\Omega=B_1$, Proposition \ref{h1prop} is proved in \cite{kuma2025} for the nonlinearities satisfying \eqref{condsuper}. 
We can also prove Proposition \ref{h1prop} for the class of nonlinearities. Indeed, if $f$ satisfies \eqref{condsuper},
thanks to the statement and the proof of \cite[Lemma 2.2]{kuma2025}, we deduce that 
\begin{equation*}
    \frac{f(u)u}{F(u)\log F(u)}>2, \hspace{2mm}F(u)\ge e^{u^2} \hspace{2mm}\text{and}\hspace{2mm} \left(\frac{F(u)}{f(u)}\right)'=1-\frac{F(u)f'(u)}{f^2(u)}<0
\end{equation*}
for any $u$ sufficiently large. Therefore, it follows for any $u$ sufficiently large that
\begin{equation*}
2f(u)u\le f(u)u\cdot\frac{f(u)u}{F(u)\log F(u)}\le\frac{f^2(u)}{F(u)}\le f'(u).
\end{equation*}
As a result, by arguing the same as in the proof of Proposition \ref{h1prop}, we obtain the result.}
\end{remark}

Thanks to Proposition \ref{h1prop}, we have the following 
\begin{proposition}
\label{superprop}
Let $k\in \mathbb{N}$, $\lambda^{*}>0$ and $\Omega\subset \mathbb{R}^2$ be a bounded strictly convex domain with class $C^2$. Assume that $u_i$ is a sequence of solutions of \eqref{gelfand} with $f=f_i$ and $\lambda=\lambda_i\in (0,\lambda^{*})$
such that $m(u_i,\Omega)\le k$, where $f_i$
satisfies \eqref{asf1}, \eqref{asf2} with some $u_{0}>0$, $M>0$, and $f_i(t)\le g(t)$ for $t\ge 0$ with some non-negative function $g\in C^0(\mathbb{R})$. 
Then, there exist a subsequence $\{u_{i_j}\}_{j\in \mathbb{N}}\subset\{u_i\}_{i\in \mathbb{N}}$, a set $\Sigma\subset\Omega$ with $\mathrm{card}(\Sigma)\le k$ and a solution $u\in C^2(\Omega\setminus\Sigma) \cap H^1_{0}(\Omega)$ of \eqref{gelfand} with some $f\ge 0$ and $\lambda\in [0,\lambda^{*}]$ such that $u_{i_j}\rightharpoonup u$ in $H^1(\Omega)$,  $u_{i_j}\to u$ in $C^2_{\mathrm{loc}}(\Omega\setminus \Sigma)$ and $m(u,\Omega)\le k$. 
\end{proposition}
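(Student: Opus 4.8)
The plan is to combine the uniform $H^1$ bound of Proposition \ref{h1prop} with a concentration--compactness analysis in which the singular set is detected through the \emph{linearized} potential $\lambda_i f_i'(u_i)$. First I would extract subsequences so that $\lambda_{i_j}\to\lambda\in[0,\lambda^{*}]$ and, using that each $f_i$ is nonnegative, convex and increasing with $0\le f_i\le g$, so that $f_{i_j}\to f$ in $C^{0}_{\mathrm{loc}}[0,\infty)$ for some convex $f$ with $0\le f\le g$ (convexity together with the pointwise bound gives local equi-Lipschitz estimates, so Arzel\`a--Ascoli applies). By Proposition \ref{h1prop} the sequence is bounded in $H^{1}_{0}(\Omega)$, whence $u_{i_j}\rightharpoonup u$ in $H^{1}_{0}(\Omega)$ along a further subsequence. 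Testing \eqref{gelfand} with $u_i$ gives $\int_\Omega|\nabla u_i|^2\,dx=\int_\Omega\lambda_i f_i(u_i)u_i\,dx$; splitting into $\{u_i\le 1\}$ (where $f_i(u_i)\le\max_{[0,1]}g$) and $\{u_i>1\}$ (where $f_i(u_i)\le f_i(u_i)u_i$) shows that the measures $\mu_i:=\lambda_i f_i(u_i)\,dx$ have uniformly bounded mass; by Lemma \ref{basiclem} together with the moving-plane bound from the proof of Proposition \ref{h1prop}, the measures $\nu_i:=\lambda_i f_i'(u_i)\,dx$ have uniformly bounded mass as well. Passing to a further subsequence, $\mu_i\overset{*}{\rightharpoonup}\mu$ and $\nu_i\overset{*}{\rightharpoonup}\nu$ as Radon measures on $\Omega$.

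Next I would set $\Sigma:=\{x\in\Omega:\nu(\{x\})>0\}$ and prove $\mathrm{card}(\Sigma)\le k$. The mechanism is that each atom of $\nu$ forces a localized negative direction of $Q_{u_i}$. Given $x_0\in\Sigma$, fix a small $R$ and $\rho\ll R$; weak-$*$ convergence yields $\int_{B_\rho(x_0)}\lambda_i f_i'(u_i)\,dx\ge\tfrac12\nu(\{x_0\})$ for $i$ large. Using the logarithmic cutoff $\xi$ equal to $1$ on $B_\rho(x_0)$ and decaying to $0$ on $\partial B_R(x_0)$, whose Dirichlet energy is $\int|\nabla\xi|^2\,dx=2\pi/\log(R/\rho)$, I obtain
\[
Q_{u_i}(\xi)\le\frac{2\pi}{\log(R/\rho)}-\frac12\,\nu(\{x_0\})<0
\]
once $\rho$ is small. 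If $\Sigma$ contained $k+1$ distinct points, disjoint balls $B_R(x_l)$ produce $k+1$ test functions with mutually disjoint supports, each giving $Q_{u_i}<0$; as the cross terms vanish, $Q_{u_i}$ is negative definite on their span, so $m(u_i,\Omega)\ge k+1$ for $i$ large, contradicting the hypothesis. Hence $\mathrm{card}(\Sigma)\le k$. This step --- exploiting the conformal (scale) invariance of the Dirichlet energy in two dimensions through a logarithmic cutoff, so that even an arbitrarily small atom of the $L^1$-bounded potential produces instability --- is the crux, and it is precisely the two-dimensional phenomenon for which fixed bump functions do not suffice.

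I would then show that $u_{i_j}$ is locally bounded on $\Omega\setminus\Sigma$, where the growth condition \eqref{asf2} enters. By \eqref{asf2} and Lemma \ref{grlem}, $f_i'(u)\ge\tfrac1M(e+f_i(u))\log(e+f_i(u))\ge f_i(u)$ for $u$ large, so wherever $u_i$ blows up the mass of $\nu_i$ dominates that of $\mu_i$; consequently every atom of $\mu$ is an atom of $\nu$, i.e. the Brezis--Merle concentration set $\{x:\mu(\{x\})\ge 4\pi\}$ is contained in $\Sigma$. Since $u_i>0$ by the maximum principle, the interior Brezis--Merle alternative applied to $-\Delta u_i=\mu_i$ with $\mu_i(\Omega)\le C$ gives $u_{i_j}$ bounded in $L^{\infty}_{\mathrm{loc}}(\Omega\setminus\Sigma)$. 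As $f_i\le g$, the right-hand sides $\lambda_i f_i(u_i)$ are then locally bounded off $\Sigma$, and elliptic regularity together with $f_i\in C^2$ upgrades this to $C^{2,\alpha}_{\mathrm{loc}}(\Omega\setminus\Sigma)$ bounds; Arzel\`a--Ascoli and $f_{i_j}\to f$ give $u_{i_j}\to u$ in $C^{2}_{\mathrm{loc}}(\Omega\setminus\Sigma)$, and passing to the limit yields $-\Delta u=\lambda f(u)$ in $\Omega\setminus\Sigma$ with $u\in C^{2}(\Omega\setminus\Sigma)\cap H^{1}_{0}(\Omega)$.

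Finally, to obtain $m(u,\Omega)\le k$ I would argue by lower semicontinuity. Since $\Sigma$ is finite and points carry zero $H^1$-capacity in two dimensions, it suffices to test with subspaces $X\subset C^{1}_{c}(\Omega\setminus\Sigma)$: for any such $X$ with $Q_u<0$ on $X\setminus\{0\}$, the $C^{2}_{\mathrm{loc}}$ convergence gives $\lambda_i f_i'(u_i)\to\lambda f'(u)$ uniformly on the compact support, so $Q_{u_i}<0$ on $X\setminus\{0\}$ for $i$ large, whence $\dim X\le m(u_i,\Omega)\le k$; taking the supremum over such $X$ yields the claim. The main obstacle throughout is the borderline two-dimensional functional analysis: the only uniform control on the potentials is in $L^1$, so both the detection of $\Sigma$ and the local boundedness away from it rest on the interplay between the growth condition \eqref{asf2} and the logarithmic structure of the Laplacian in the plane, rather than on the Sobolev-type arguments available when $N\ge 3$.
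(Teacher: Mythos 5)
Your construction and counting of $\Sigma$ are correct, and they follow a genuinely different route from the paper: after the $H^1$ bound the paper simply invokes the method of \cite[Proposition 2.3]{Figalli} (local stability of $u_i$ away from at most $k$ ``instability balls'', plus interior estimates for stable solutions), whereas you detect $\Sigma$ as the atoms of the weak-$*$ limit of $\nu_i=\lambda_i f_i'(u_i)\,dx$ and rule out $k+1$ atoms with logarithmic cutoffs; that capacity argument is sound. The genuine gap is in your local boundedness step: you apply ``the interior Brezis--Merle alternative to $-\Delta u_i=\mu_i$ with $\mu_i(\Omega)\le C$'', but no such alternative exists for a general $L^1$-bounded right-hand side. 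For instance, $h(x)=|x|^{-2}\bigl(\log(1/|x|)\bigr)^{-2}$ belongs to $L^1(B_{1/2})$ and is atom-free, yet the solution of $-\Delta v=h$ grows like $\log\log(1/|x|)$, so absence (or smallness) of atoms of $\mu$ near $x_0$ does not by itself give $L^\infty_{\mathrm{loc}}$ bounds. The Brezis--Merle alternative is a theorem about right-hand sides of the special form $V_ie^{u_i}$ with $V_i$ bounded in $L^p$, $p>1$: its proof feeds the exponential-integrability estimate $\int\exp\bigl((4\pi-\delta)|v_i|/\lVert h_i\rVert_{L^1}\bigr)\le C$ back into the equation, which requires the nonlinearity to grow \emph{at most} exponentially. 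Here \eqref{asf2} forces $f_i$ to grow at least doubly exponentially (Lemma \ref{grlem}), so $f_i(u_i)$ is not controlled by any power of $e^{u_i}$, and the bootstrap --- hence the alternative and its $4\pi$ threshold --- is unavailable for this class of nonlinearities.

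The step can be repaired with tools you have already assembled, and the repair is essentially the computation the paper performs later in the proof of Theorem \ref{mainthm}: represent $u_i$ on a small ball $B_\rho(x_0)$ by the Green function and Poisson kernel of that ball. The boundary contribution is handled by $u_i\ge 0$, the $L^1(\Omega)$ bound on $u_i$ coming from Proposition \ref{h1prop}, and an averaged choice of $\rho$; for the Newtonian part, with $s=|x-y|$, split into $\{sf_i(u_i)\le 1\}$, where $-f_i(u_i)\log s\le -s^{-1}\log s$ is integrable in the plane, and $\{sf_i(u_i)\ge 1\}$, where $-\log s\le \log f_i(u_i)$, so that \eqref{asf2} bounds the integrand by $C\lambda_i f_i(u_i)+M\lambda_i f_i'(u_i)$, whose integral is controlled by the mass bound on $\mu_i$ and by Lemma \ref{basiclem}. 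This is where \eqref{asf2} genuinely enters --- as an $L\log L$ bound exactly dual to the logarithmic kernel --- rather than through a Brezis--Merle concentration threshold; note that the repaired argument bounds $u_i$ near \emph{every} interior point, not only off $\Sigma$. With this fix, your remaining steps (elliptic regularity, Arzel\`a--Ascoli, and the semicontinuity argument for $m(u,\Omega)\le k$) go through.
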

\begin{proof}
By Proposition \ref{h1prop}, we deduce that there exists a solution $u\in H^1_{0}(\Omega)$ such that $u_i\rightharpoonup u$ in $H^1(\Omega)$ by taking a subsequence if necessary. The left part is proved by using the same method as that of \cite[Proposition 2.3]{Figalli}.  
\end{proof}

\begin{proof}[Proof of Theorem \ref{mainthm}]
We assume by contradiction that there exists a sequence $\{u_i\}_{i\in \mathbb{N}}$ of solutions to \eqref{gelfand} with $f=f_i$ and 
$\lambda=\lambda_i$ such that $\lVert u_i \rVert_{L^{\infty}(\Omega)}\to \infty$ and $m(u_i,\Omega)\le k$, where $f_i$ and $\lambda=\lambda_i$ satisfy all the assumptions in the statement of Theorem \ref{mainthm} with some $M>0$, $u_0>0$, $0\le g\in C^{0}(\mathbb{R})$ and $\lambda^{*}>0$. 

Then, thanks to Proposition \ref{superprop}, we have
$u_i\to u$ in $C^2_{\mathrm{loc}}(\Omega\setminus \Sigma)$ with $\mathrm{card}(\Sigma)\le k$ and $u\in C^2(\Omega)$ by taking a subsequence if necessary. 
Without loss of generality, we assume that $\Sigma=\{x_0\}$. Then, it is sufficient to prove that 
\begin{equation}
\label{key}
    \lVert u_i\rVert_{L^\infty(B_{r}(x_0))}\le C \hspace{4mm} \text{with some $r>0$},
\end{equation}
where $C$ is independent of $i$.
Indeed, we verify that if $d>0$ is sufficiently small, we have $\sup_{\Omega_d} u_i=\sup_{\Omega}u_i$ by using the moving plane method, where $\Omega_{d}:=\{x\in \Omega:\mathrm{dist}(x,\partial\Omega)>d\}$. We fix $0<d<1$ so that the above assertion is satisfied. Then, by using the fact that $u_i\to u$ in $C^2_{\mathrm{loc}}(\Omega\setminus \Sigma)$ and \eqref{key}, we obtain $\lVert u_i\rVert_{L^{\infty}(\Omega)}= \lVert u_i\rVert_{L^{\infty}(\Omega_d)}\le C$, which is a contradiction.

Finally, we show \eqref{key}. In the case $x_0\in\Omega\setminus\Omega_{d/2}$, we fix $r=\frac{d}{4}$. Then, since $B_r(x_0)\subset\Omega\setminus\Omega_{d}$, it follows from the fact $u_i\to u$ in $C^2_{\mathrm{loc}}(\Omega\setminus \Sigma)$ that 
\begin{equation*}
    \lVert u_i \rVert_{L^{\infty}(B_r(x_0))}\le \lVert u_i \rVert_{L^{\infty}(\Omega\setminus\Omega_{d})}\le \lVert u_i\rVert_{L^{\infty}(\Omega_d)}\le C.
\end{equation*}
In the case $x_0\in\Omega_{d/2}$, we fix $r=\frac{d}{20}$. Then, for $y\in B_{r}(x_0)$, it follows from Green's representation formula that 
\begin{equation*}
2\pi u_i(y)=\int_{B_{2r}(y)}-\lambda_i f(u_i)\log|x-y|\,dx+\int_{\partial B_{2r}(y)} \left(\frac{u_i}{|x-y|}-\frac{\partial u_i}{\partial\nu}\log|x-y| \right) \,dS,
\end{equation*}
where $\nu$ is the outer unit normal to $\partial B_{2r}(y)$.
Thanks to the fact that $u_i\to u$ in $C^2_{\mathrm{loc}}(\Omega\setminus \Sigma)$ and $\partial B_{2r}(y)\subset \Omega_{7d/20}\setminus B_{r}(x_0)$, we have 
\begin{equation*}
   \int_{\partial B_{2r}(y)} \left(\frac{u_i}{|x-y|}-\frac{\partial u_i}{\partial\nu}\log|x-y| \right) \,dS
   \le C.
\end{equation*}
Therefore, it suffices to prove 
\begin{equation}
\label{green}
    \int_{B_{2r}(y)}-\lambda_i f(u_i)\log|x-y|\,dx\le C.\notag
\end{equation}
We denote by $s=|x-y|$. Here, we remark that $B_{4r}(y)\subset\subset\Omega_{d/4}$. Thus, by using Lemma \ref{basiclem}, it follows that
\begin{align*}
&\int_{B_{2r}(y)}-\lambda_i f(u_i)\log s\,dx\\
&\le \int_{\{ sf(u_i)\le 1\}\cap B_{2r}(y)} -  \lambda_i f(u_i)\log s\,dx+\int_{\{ sf(u_i)\ge 1\}\cap B_{2r}(y)} - \lambda_i f(u_i)\log s\,dx\\
&\le C+\int_{\{ sf(u_i)\ge 1\}\cap B_{2r}(y)} \lambda_i  f(u_i)\log f(u_i)\le C + \int_{B_{2r}(y)}M\lambda_i f'(u_i)\,dx\le C.
\end{align*}
Therefore, we get the result.
\end{proof}

\begin{proof}[Proof of Theorem \ref{mainthm2}]
We define 
\begin{equation*}
    \mathcal{O}=\{u\in C^{1}_{0}(\overline{\Omega}): u>0 \hspace{2mm}\text{in }\Omega \hspace{2mm} \text{and}\hspace{2mm} \frac{\partial u}{\partial \nu}<0 \hspace{2mm}\text{on } \partial \Omega\}
\end{equation*}
and
\begin{equation*}
\mathbb{R}_{+}\times \mathcal{O}\ni (\lambda,u)\to  \mathcal{F}((\lambda,u)):=u+\lambda \Delta^{-1}[f(u)] \in C^{1}_{0}(\overline{\Omega}), 
\end{equation*}
where $\nu$ is the outer unit normal to $\partial \Omega$ and $\Delta^{-1}$ is the inverse of the Dirichlet Laplacian in $\Omega$.
Then, thanks to Lemma \ref{grlem} and \cite[Proposition 3.3.1]{Dup}, we deduce that 
\eqref{gelfand} has no solutions for $\lambda>\lambda^{*}$ with some $\lambda^{*}\in (0,\infty)$. Thus, thanks to Theorem \ref{mainthm} and the analytic theory \cite[Section 2.1]{BD}, by the same argument as in the proof of \cite[Theorem 1]{Dan}, we have the result. 
\end{proof}


\subsection*{Acknowledgment}
The author appreciates Professor Michiaki Onodera for his valuable advice in order to clarify the presentation. The author is also grateful to Yaqing Peng for pointing out an error of the previous version.

\bibliographystyle{plain}
\bibliography{morse_index_2.bib}

\end{document}